\documentclass[11pt]{article}

\usepackage[T1]{fontenc}
\usepackage[english]{babel}
\usepackage{microtype}
\usepackage{amsmath,amssymb,amsfonts,amsthm,mathtools}
\usepackage{geometry}
\usepackage{graphicx}
\usepackage{hyperref}
\usepackage[shortlabels]{enumitem}
\usepackage{xcolor}

\newtheorem{theorem}{Theorem}[section]
\newtheorem{lemma}[theorem]{Lemma}
\newtheorem{claim}[theorem]{Claim}

\newtheorem{problem}[theorem]{Problem}

\newtheorem{definition}[theorem]{Definition}

\newcommand{\ceil}[1]{\lceil #1\rceil}
\newcommand{\floor}[1]{\lfloor #1\rfloor}

\title{Polylogarithmic Bounds for Nested Cycles without Geometric Crossings}
\author{
Yue Xu\textsuperscript{1}\thanks{Email: \href{mailto:xuyue1@sjtu.edu.cn}{xuyue1@sjtu.edu.cn}},\quad
Jiasheng Zeng\thanks{Email: \href{mailto:jasonzeng@mail.ustc.edu.cn}{jasonzeng@mail.ustc.edu.cn}},\quad
and Xiao-Dong Zhang\textsuperscript{1}\thanks{Email: \href{mailto:xiaodong@sjtu.edu.cn}{xiaodong@sjtu.edu.cn}}
\\[0.4em]
\textsuperscript{1}School of Mathematical Sciences, Shanghai Jiao Tong University, Shanghai, China
}
\date{\today}

\begin{document}

\begingroup
\renewcommand{\thefootnote}{\fnsymbol{footnote}}
\maketitle
\endgroup
\setcounter{footnote}{0}

\begin{abstract}
A problem of Erd\H{o}s asks for extremal conditions forcing edge-disjoint cycles with a prescribed nested structure.  In the geometric version, the nesting is required to be noncrossing with respect to the cyclic orders.  Fern\'andez, Kim, Kim and Liu proved that constant average degree forces two such cycles.  We prove a polylogarithmic bound for the natural multi-layer version: for every fixed $k\ge 3$, every sufficiently large $n$-vertex graph with at least
\[
        C_k n(\log n)^{k-1}(\log\log n)^{k-3}
\]
edges contains $k$ pairwise edge-disjoint nested cycles without geometric crossings.  The proof combines the robust sublinear expander framework of Alon, Buci\'c, Sauermann, Zakharov and Zamir with a controlled wrapping lemma that permits the layers to be built successively with controlled length.
\end{abstract}

\section{Introduction}

Extremal problems concerning cycles form a central theme in graph theory.  A classical result of Corr\'adi and Hajnal states that every graph $G$ with minimum degree at least $2k$ and $|G|\ge 3k$ contains $k$ vertex-disjoint cycles \cite{CorradiHajnal1963}.  This line of work was later developed in several directions.  H\"aggkvist and Egawa studied vertex-disjoint cycles of the same length \cite{Haggkvist1985,Egawa1996}.  More generally, cycles may be viewed as minimal subgraphs of minimum degree two or connectivity two, and there are corresponding decomposition and partition results for graphs under minimum degree or connectivity constraints~\cite{Hajnal1983,Thomassen1983,Stiebitz1996,KuhnOsthus2003}.  Verstra\"ete proved a related result on vertex-disjoint cycles whose lengths form an arithmetic progression \cite{Verstraete2002}.

In 1975, Erd\H{o}s asked several extremal questions about forcing edge-disjoint cycles with additional structure \cite{Erdos1975}.  The first one concerns nested cycles.  Cycles $C_1,\ldots,C_k$ in a graph are called nested if
\[
        V(C_k)\subseteq V(C_{k-1})\subseteq \cdots \subseteq V(C_1).
\]
If, in addition, their edge sets are pairwise disjoint, then they are edge-disjoint nested cycles.  Bollob\'as proved that a linear number of edges forces two edge-disjoint nested cycles \cite{Bollobas1978}.  He also asked whether an analogous linear bound holds for any fixed number of nested cycles.  This was confirmed by Chen, Erd\H{o}s and Staton, who proved that $O_k(n)$ edges force $k$ edge-disjoint nested cycles \cite{ChenErdosStaton1996}.

Erd\H{o}s also asked a stronger geometric version.  Suppose $V(C_2)\subseteq V(C_1)$.  If the vertices of $C_2$ are viewed in the cyclic order induced by $C_1$, then one may ask that the edges of $C_2$ do not cross inside $C_1$.  Equivalently, the cyclic order of $V(C_2)$ induced by $C_1$ agrees with the cyclic order of $C_2$.  This condition is substantially more rigid than ordinary nesting.  The proof of Chen, Erd\H{o}s and Staton proceeds by finding a cycle $C$ such that the subgraph induced on $V(C)$ remains dense, and then iterating this density increment.  However, this method gives no control over the cyclic order of the next cycle, and therefore does not address the no-crossing condition.

Fern\'andez, Kim, Kim and Liu \cite{FKKL2022} resolved the two-cycle geometric problem by proving that there is an absolute constant $C$ such that every graph of average degree at least $C$ contains two edge-disjoint nested cycles without geometric crossings.  Their proof uses sublinear expanders, a notion originating in work of Koml\'os and Szemer\'edi on topological cliques \cite{KomlosSzemeredi1996} and further developed by Haslegrave, Kim and Liu \cite{HaslegraveKimLiu2022}.  Sublinear expanders have played an important role in several recent results on sparse graph structure, including work on Hamiltonian subsets and clique subdivisions \cite{KimLiuSharifzadehStaden2017,LiuMontgomery2017}.  The main structural object in the proof of \cite{FKKL2022} is a kraken, which provides an inner cycle together with short arms that can be linked in cyclic order to obtain an outer noncrossing cycle.

The two-cycle theorem of \cite{FKKL2022} leaves open the natural multi-layer question.

\begin{problem}\label{prob:constant-degree}
For every fixed $k$, is there a constant $d_k$ such that every graph of average degree at least $d_k$ contains $k$ edge-disjoint nested cycles without geometric crossings?
\end{problem}

The constant-average-degree version appears to remain open already for $k=3$.  The main contribution of this paper is to give the first general upper bound for this multi-layer geometric problem: we show that a polylogarithmic average-degree assumption is sufficient for every fixed number of layers. For an integer $k\ge 2$, let $f_k(n)$ be the smallest integer such that every $n$-vertex graph with at least $f_k(n)$ edges contains pairwise edge-disjoint cycles $C_1,C_2,\ldots,C_k$ satisfying
\[
        V(C_k)\subseteq V(C_{k-1})\subseteq\cdots\subseteq V(C_1),
\]
and such that, for each $i=1,\ldots,k-1$, the cyclic order of $V(C_{i+1})$ induced by $C_i$ agrees with the cyclic order of $C_{i+1}$.  We call such cycles $k$ nested cycles without geometric crossings.

\begin{theorem}\label{thm:main}
For each fixed integer $k\ge 3$, there are constants $C_k>0$ and $n_k$ such that, for every $n\ge n_k$,
\[
        f_k(n)\le C_k n(\log n)^{k-1}(\log\log n)^{k-3}.
\]
\end{theorem}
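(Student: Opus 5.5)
We build the cycles from the innermost one outward, $C_k$ first and $C_1$ last. Each new outer cycle is obtained by "wrapping" the previous cycle: we attach short, pairwise disjoint arms to its vertices and link consecutive arm ends so that the cyclic order is respected. The cost of each layer is one factor of $\log n$, coming from path lengths in a sublinear expander. An extra $\log\log n$ factor per layer beyond the first two pays for keeping the wrapped cycle short enough that the next wrapping is still possible.

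First we reduce to a robust sublinear expander. By the framework of Alon, Buci\'c, Sauermann, Zakharov and Zamir, every graph with $e(G)\ge C_k n(\log n)^{k-1}(\log\log n)^{k-3}$ contains a subgraph $H$ with two properties. It has average degree $d\ge c\,C_k(\log n)^{k-1}(\log\log n)^{k-3}$ and comparable minimum degree. It is also a robust expander: after deleting any set of $o(d)$ edges per vertex, or a set of vertices small relative to the set being expanded, any two sets of size $\gtrsim d$ can be joined by a path of length $O(\log n)$. Working inside $H$, we fix a budget of $L_j$ for the length of the $j$-th cycle constructed from the inside out. The targets are $L_1=O(\log n)$ for the innermost cycle (a short cycle exists by expansion) and $L_{j+1}=O(L_j\log n)$.

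The key step is a controlled wrapping lemma. Suppose $C$ is a cycle of length $\ell\le L_j$ in $H$, and $F$ is a set of already-used edges that is sparse, say at most $\ell$ edges per layer. Then $H-F$ contains a cycle $C'$, edge-disjoint from $C$ and $F$, with the following properties: $V(C)\subseteq V(C')$, the vertices of $C$ appear along $C'$ in the cyclic order of $C$, and $|C'|\le O(\ell\log n)$.

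We prove it in three moves.
\begin{enumerate}[(i)]
\item At each vertex $v_i$ of $C=v_1\cdots v_\ell$, grow a small unused neighbourhood expansion, or ``kraken arm,'' $B_i$, disjoint from the other arms. Since $d\gg\ell$ in the relevant range, we can take $|B_i|\approx d/\ell$ or larger.
\item Link $B_i$ to $B_{i+1}$ for $i=1,\dots,\ell$ (indices mod $\ell$) one at a time by paths of length $O(\log n)$. Each path avoids the previously built paths, the other arms, and $V(C)$ except at its endpoints. This is possible because the forbidden set has size $O(\ell\log n)$, which robust expansion tolerates.
\item Concatenate the paths in order. The result $C'$ visits $v_1,\dots,v_\ell$ in cyclic order, so there are no crossings, and it uses none of the edges of $C$.
\end{enumerate}
Applying the lemma $k-1$ times yields $C_k,\dots,C_1$.

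The nesting direction needs care. The problem requires $V(C_{i+1})\subseteq V(C_i)$ with the inner cycle's cyclic order induced by the outer cycle, and this is exactly what wrapping provides. Edge-disjointness holds because each step avoids all previously used edges.

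The main obstacle is step (i)/(ii) at the later layers. There the cycle has length $\ell\approx(\log n)^{j}$, so the $\ell$ arms must be disjoint and each still large enough to expand. Each arm therefore gets only about $n/\ell$ room, and, more seriously, degree about $d/\ell$ after removing the other arms. This is why $d$ must exceed roughly $\ell\cdot\log n$ at the last wrapping, giving $(\log n)^{k-1}$. The $(\log\log n)^{k-3}$ factor should come from a subtlety: the arms must expand from size $d/\ell$ to polylogarithmic size within $O(\log\log n)$ steps, so that the linking paths, and hence the next layer's length, do not pick up more than a $\log\log n$ overhead per layer. The first approach we would try is to build the arms as small trees of depth $O(\log\log n)$ greedily in the high-minimum-degree subgraph. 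We would then use the robust expander's guarantee that sets of size at least $\mathrm{polylog}(n)$ connect by short paths while avoiding the $O(\ell\log n)$ forbidden vertices, and check by induction that $L_j\le C(\log n)^{j}(\log\log n)^{j-2}$ stays below the $d/\log n$ threshold required for all $j\le k$.
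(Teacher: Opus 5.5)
There is a genuine gap. The length bookkeeping does not close, and the missing idea is how to build the outermost cycle. You apply the same short-path wrapping lemma $k-1$ times. Wrapping a cycle of length $\ell$ that way needs degree $\gtrsim \ell\cdot(\text{path length})$, so that the $\ell$ linking paths can avoid each other. With your own budget $L_1=O(\log n)$, $L_{j+1}=O(L_j\log n)$, the cycle $C_2$ has length about $(\log n)^{k-1}$. The final wrap $C_2\to C_1$ then needs $d\gtrsim(\log n)^{k}$, a full $\log n$ factor more than claimed. Equivalently, your closing check already fails at $j=k-1$: $L_{k-1}\approx(\log n)^{k-1}(\log\log n)^{k-3}$ exceeds $d/\log n\approx(\log n)^{k-2}(\log\log n)^{k-3}$.

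The paper's remedy is that the length of $C_1$ is irrelevant, since nothing wraps it, so $C_1$ is built by linkedness instead of by short paths. A robust sublinear expander with $\delta(H)\ge q$ satisfies $\kappa(H)\ge c\,\sigma$ with $\sigma=\min\{q,N^{1/2}\}$: every component of $H-S$ has at least $q/2$ vertices and must expand into $S$. Hence $H-V(C_2)$ is $|C_2|$-linked by the Bollob\'as--Thomason theorem as soon as $\sigma\gtrsim|C_2|$. Choose two distinct outside neighbours $a_i^-,a_i^+$ of each vertex $x_i$ of $C_2$ (you also need such an in-route and out-route at every cycle vertex). Then all pairs $a_i^+,a_{i+1}^-$ are joined simultaneously by vertex-disjoint paths in $H-V(C_2)$. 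Only the $k-2$ inner wraps pay the path-length factor.

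You also attribute the $\log\log$ factors to the wrong source. The robust-expansion guarantee says a set of size $N/e^{x}$ grows by a factor $1+x/(3\log N)$. This needs order $\log N$ steps per halving of $x$, so it yields connecting paths of length $R=O(\log N\log\log N)$, not $O(\log n)$. Each inner wrap multiplies the length by $R$. The paper starts from a shortest cycle of length $O(\log N/\log\delta(H))=O(\log N/\log\log N)$, by the Moore bound. This gives $|C_2|=O((\log N)^{k-1}(\log\log N)^{k-3})$, which is exactly the threshold for the linked last step. Starting from length $O(\log N)$ would cost one more $\log\log$ factor.

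No arms of size $d/\ell$ are needed. Their total size $\Theta(d)$ is also not accounted for in your $O(\ell\log n)$ forbidden set. The paper builds the $i$-th path between $N(a_i^+)\setminus S_i$ and $N(a_{i+1}^-)\setminus S_i$. These are sets of size $\ge\sigma/4$, and they tolerate a forbidden set $S_i$ (cycle, terminals, earlier paths) of size $O(\ell R)\le\sigma/1000$.

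Finally, the reduction only gives $d(H)\ge\frac{\log N}{3\log n}d(G)$, where $N=|H|$ may be much smaller than $n$. So the thresholds must be phrased in terms of $\log N$ and transferred back, rather than asserting $d(H)\gtrsim C_k(\log n)^{k-1}(\log\log n)^{k-3}$.
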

Thus, while the constant-average-degree problem remains open for $k\ge 3$, Theorem~\ref{thm:main} shows that only a polylogarithmic average-degree assumption is sufficient for any fixed number of noncrossing layers.

Let us briefly describe the proof.  The main difficulty is that the wrapping operation has to be iterated.  It is not enough to find one noncrossing outer cycle around a given cycle; one must also keep the new cycle short enough so that the expansion available in the ambient graph can still be used at the next step.

We first pass to a robust sublinear expander, using the framework of Alon, Buci\'c, Sauermann, Zakharov and Zamir \cite{AlonBucicSauermannZakharovZamir2025}.  Starting from a shortest cycle $C_k$ in this expander, we repeatedly construct an outer cycle around the current one.  The key point is a controlled wrapping lemma: if the current cycle has length $\ell$, then, under the relevant robust-expansion scale condition, it can be wrapped by an outer cycle of length $O(\ell\log N\log\log N)$, with the prescribed cyclic order and without using edges inside the current cycle.  This last edge-avoidance condition is what allows the construction to be iterated while keeping all layers edge-disjoint.

After $k-2$ controlled wrapping steps we obtain $C_2$.  A final application of a linkedness theorem of Bollob\'as and Thomason gives the outermost cycle $C_1$.  The length control in the intermediate steps leads to the bound
\[
        C_k n(\log n)^{k-1}(\log\log n)^{k-3}.
\]

Let us also place the result among related problems.  Erd\H{o}s's third question asks for edge-disjoint cycles with the same vertex set.  This is stronger than ordinary nesting and different in nature from the no-crossing nested-cycle problem.  Pyber, R\"odl and Szemer\'edi gave lower-bound constructions related to the absence of regular subgraphs \cite{PyberRodlSzemeredi1995}.  Recently, Chakraborti, Janzer, Methuku and Montgomery \cite{ChakrabortiJanzerMethukuMontgomery2025} proved a polylogarithmic upper bound for finding edge-disjoint cycles with the same vertex set.  Their proof uses powerful reservoir and regularisation ideas.  Although that framework does not directly yield the ordered cyclic linkage needed here, it suggests a possible route toward further improvements.

There are also several related extremal questions about cycles with many additional edges or prescribed chord structure.  Chen, Erd\H{o}s and Staton asked about forcing a cycle with many chords \cite{ChenErdosStaton1996}; Dragani\'c, Methuku, Munh\'a Correia and Sudakov obtained a polylogarithmic improvement for cycles with many chords \cite{DraganicMethukuMunhaCorreiaSudakov2024}.  Brada\v{c}, Methuku and Sudakov resolved an old problem of Erd\H{o}s on cycles with all diagonals \cite{BradacMethukuSudakov2024}.  The related Erd\H{o}s--Sauer problem on regular subgraphs was resolved by Janzer and Sudakov, who showed that average degree $C_k\log\log n$ forces a $k$-regular subgraph \cite{JanzerSudakov2023}.  These results show that sparse graphs with polylogarithmic average degree already contain rich cyclic or regular structure, but the ordered noncrossing nesting condition considered here requires additional control over cyclic order.

\section{Preliminaries}

Throughout, all graphs are finite and simple.  All logarithms are natural.  We omit floors and ceilings whenever they do not affect the argument.  All constants are absolute unless otherwise stated.

\subsection{Basic notation}

For a positive integer $r$, we write $[r]:=\{1,2,\ldots,r\}$.  For a graph $G$, we write $V(G)$ and $E(G)$ for its vertex set and edge set, respectively.  The number of vertices and edges are denoted by $|G|:=|V(G)|$ and $e(G):=|E(G)|$, respectively.  The average degree of $G$ is denoted by
\[
        d(G):=\frac{2e(G)}{|G|},
\]
and the minimum degree of $G$ is denoted by $\delta(G)$.  For a vertex $v\in V(G)$, we write $d_G(v)$ for its degree in $G$.  If $X\subseteq V(G)$, then
\[
        N_G(X):=\{v\in V(G)\setminus X:\text{ there exists }x\in X\text{ with }vx\in E(G)\}
\]
denotes the external neighbourhood of $X$ in $G$.  For a single vertex $v$, we write $N_G(v):=N_G(\{v\})$.  If $F\subseteq E(G)$, then $G-F$ denotes the spanning subgraph obtained from $G$ by deleting the edges in $F$.  If $S\subseteq V(G)$, then $G-S$ denotes the induced subgraph $G[V(G)\setminus S]$.  For two vertex sets $A,B\subseteq V(G)$, the distance between $A$ and $B$ in $G$ is the minimum length of a path in $G$ with one endpoint in $A$ and the other endpoint in $B$.  If $A\cap B\ne\varnothing$, this minimum may be zero; throughout the paper a path of length zero is allowed and consists of a single vertex.  If no such path exists, this distance is taken to be infinite. The vertex-connectivity of a graph $G$, namely the minimum number of vertices whose removal disconnects $G$ or leaves a single vertex, is denoted by $\kappa(G)$.

\subsection{Nested cycles}

Let $C_1,\ldots,C_k$ be cycles in a graph $G$.  We say that they are $k$ nested cycles without geometric crossings if
\[
        V(C_k)\subseteq V(C_{k-1})\subseteq\cdots\subseteq V(C_1),
\]
the cycles are pairwise edge-disjoint, and, for every $i=1,\ldots,k-1$, the cyclic order of $V(C_{i+1})$ induced by $C_i$ agrees with the cyclic order of $C_{i+1}$.  Equivalently, if the vertices of $V(C_{i+1})$ are read in their cyclic order along $C_i$, then this order agrees with their cyclic order along $C_{i+1}$, up to reversal.

\subsection{Robust sublinear expanders}

We use the following robust expansion notion.

\begin{definition}[Robust sublinear expander]\label{def:robust-expander}
A graph $H$ on $N\ge 2$ vertices is called a robust sublinear expander if, for every $0\le \alpha\le 1$ and every non-empty set $U\subseteq V(H)$ with
\[
        |U|\le N^{1-\alpha},
\]
and for every set $F\subseteq E(H)$ with
\[
        |F|\le \frac{\alpha}{3}d(H)|U|,
\]
one has
\[
        |N_{H-F}(U)|\ge \frac{\alpha}{3}|U|.
\]
\end{definition}

We need the following expander-reduction lemma.  The robust-expansion part is used exactly as a black box from Lemma~3.2 of \cite[Lemma~3.2, p.~9]{AlonBucicSauermannZakharovZamir2025}.  Our Definition~\ref{def:robust-expander} is chosen to match that lemma: the edge-deletion allowance is $(\alpha/3)d(H)|U|$, and the guaranteed external neighbourhood size is $(\alpha/3)|U|$.  Thus no change of constants is hidden in the invocation below.  We include only the short minimum-degree pruning argument, since it is a convenient consequence of taking the same extremal subgraph.

\begin{lemma}[Robust expander reduction]\label{lem:robust-reduction}
Let $G$ be a graph with at least one edge.  Then $G$ contains a subgraph $H$ such that $H$ is a robust sublinear expander in the sense of Definition~\ref{def:robust-expander} and
\[
        d(H)\ge \frac13\frac{\log |V(H)|}{\log |V(G)|}d(G).
\]
Moreover, $H$ can be chosen so that
\[
        \delta(H)\ge \frac12 d(H).
\]
\end{lemma}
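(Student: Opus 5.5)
The plan is to mimic the extremal-subgraph argument of \cite{AlonBucicSauermannZakharovZamir2025} and add a pruning step. Write $n:=|V(G)|$. For a subgraph $H\subseteq G$ with $|H|\ge 2$ and at least one edge, consider the potential
\[
        \Phi(H):=\frac{d(H)}{\log |V(H)|}.
\]
Choose $H$ to maximise $\Phi$ among all subgraphs of $G$ with at least two vertices; if several subgraphs attain the maximum, take one with $|V(H)|$ minimal. (If $n=2$ the statement is trivial, so we may take $G$ itself.) Since $G$ is a candidate, $\Phi(H)\ge \Phi(G)$, which gives $d(H)\ge \frac{\log|V(H)|}{\log n}d(G)$. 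This is even stronger than the claimed factor $\frac13$. The robust expansion property is exactly what Lemma~3.2 of \cite{AlonBucicSauermannZakharovZamir2025} derives from this extremality, possibly for a slightly modified potential; that modification is where the constant $\frac13$ in the degree bound is lost. I will invoke that lemma as a black box, as the text above announces, with Definition~\ref{def:robust-expander} matching its constants. So it suffices to fix the extremal $H$ supplied there and to check the minimum-degree clause for that same $H$.

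For the minimum-degree clause, suppose $v\in V(H)$ has $d_H(v)<\frac12 d(H)$, and let $H':=H-v$, with $N:=|V(H)|$. Then
\[
        d(H')=\frac{2e(H)-2d_H(v)}{N-1}>\frac{Nd(H)-d(H)}{N-1}=d(H).
\]
Since $\log(N-1)<\log N$, this gives $\Phi(H')>\Phi(H)$, provided $H'$ is a legal competitor. If $N\ge 3$, then $H'$ has at least two vertices, and it has an edge because $d(H')>d(H)>0$. This contradicts the maximality of $H$.

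The same comparison works for whatever potential Lemma~3.2 uses, as long as that potential is nondecreasing in $d$ and nonincreasing in the number of vertices. I expect this to be the only point needing care. If the cited lemma's potential instead involves something like $d/\log(\text{const}\cdot N)$, the argument is unchanged. If it involves a more delicate normalisation, I would re-run the two-line computation above with that normalisation. Deleting a low-degree vertex strictly raises $d$ and lowers $N$, so any such monotone potential strictly increases.

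The degenerate case $N=2$ needs separate handling. Here $H$ is a single edge, so $\delta(H)=1=d(H)\ge\frac12 d(H)$ holds directly. Hence every vertex of the chosen $H$ satisfies $d_H(v)\ge\frac12 d(H)$. Together with the black-box robust expansion and the degree bound, this proves Lemma~\ref{lem:robust-reduction}.
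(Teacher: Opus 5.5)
Your proposal is correct and follows the paper's route. Robust expansion is taken as a black box for the extremal subgraph in the proof of the cited Lemma~3.2, the degree bound comes from comparing with $J=G$, and deleting a vertex of degree below $d(H)/2$ contradicts extremality. The only differences are minor: the paper fixes the potential $d(J)/(\log|V(J)|-1/3)$, so the factor $\frac13$ comes out explicitly from $\log|V(H)|-\frac13\ge\frac13\log|V(H)|$ for $|V(H)|\ge 2$, and it covers your separate $N=2$ case by showing that $H-v$ still contains an edge.
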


\begin{proof}
Choose a subgraph $H\subseteq G$ as follows. Among all subgraphs $J\subseteq G$ with $V(J)\neq\emptyset$. We choose H to be a subgraph of G that maximizes
\[
        \frac{d(J)}{\log |V(J)|-1/3}.
\]
Since $G$ has at least one edge, this maximum is positive. Hence $d(H)>0$, and in particular $H$ has at least one edge and $|V(H)|\ge 2$. By the proof of \cite[Lemma~3.2, pp.~9--11]{AlonBucicSauermannZakharovZamir2025}, with $\varepsilon$ there renamed as $\alpha$, this extremal subgraph is a robust sublinear expander in the sense of Definition~\ref{def:robust-expander}. Moreover, the maximality of $H$, applied to $J=G$, gives
\[
        d(H)\ge
        \frac{\log |V(H)|-1/3}{\log |V(G)|-1/3}\,d(G).
\]
Since $|V(H)|\ge 2$, we have $\log |V(H)|-1/3\ge \frac13\log |V(H)|$, and also $\log |V(G)|-1/3\le \log |V(G)|$. Therefore
\[
        d(H)\ge
        \frac13\,\frac{\log |V(H)|}{\log |V(G)|}\,d(G),
\]
which is the asserted lower bound.

It remains to record the minimum-degree consequence.  Suppose that some vertex $v\in V(H)$ satisfies $d_H(v)<d(H)/2$.  Let $m=|V(H)|$ and put $H'=H-v$.  We claim that $H'$ has at least one edge.  Indeed, if $H'$ were edgeless, then every edge of $H$ would be incident with $v$, so
\[
        d_H(v)=e(H)=\frac{m d(H)}2>\frac{d(H)}2,
\]
contrary to the choice of $v$.  Thus $H'$ has at least one edge, and in particular $|V(H')|\ge 2$ and the denominator below is positive.  Moreover,
\[
        d(H')=\frac{m d(H)-2d_H(v)}{m-1}>d(H).
\]
Also $\log |V(H')|-1/3<\log |V(H)|-1/3$.  Hence
\[
        \frac{d(H')}{\log |V(H')|-1/3}>
        \frac{d(H)}{\log |V(H)|-1/3},
\]
contradicting the choice of $H$.  Thus $\delta(H)\ge d(H)/2$.
\end{proof}

\subsection{Linkedness and the Moore bound}

We use the following theorem of Bollob\'as and Thomason.

\begin{theorem}[Bollob\'as--Thomason linkedness theorem~\cite{BollobasThomason1996}]\label{thm:linkedness}
There is an absolute constant $K_{\mathrm{link}}$ such that every $K_{\mathrm{link}}t$-connected graph is $t$-linked.  That is, for any distinct vertices
\[
        p_1,q_1,\ldots,p_t,q_t,
\]
there exist pairwise vertex-disjoint paths $P_i$ joining $p_i$ to $q_i$, for $i\in[t]$.
\end{theorem}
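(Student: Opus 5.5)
Since the statement is cited from \cite{BollobasThomason1996} and is used as a black box, the following is only a sketch of how I would reprove it along the classical lines.

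The plan is to prove a stronger, more local statement. Suppose a graph contains a subgraph $H$ that is \emph{internally well linked}: $H$ is small and dense enough that any $2t$ prescribed vertices of $H$ can be paired up by vertex-disjoint short paths inside $H$. Then high connectivity of the host graph lets us bring the terminals $p_1,q_1,\ldots,p_t,q_t$ into $H$ and link them there. So there are three steps:
\begin{enumerate}[(i)]
\item find a dense substructure;
\item route the terminals to it by Menger's theorem;
\item link inside the substructure.
\end{enumerate}

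For step (i), I would use the degree-based extremal argument of Mader type. Since $G$ is $K_{\mathrm{link}}t$-connected, we have $\delta(G)\ge K_{\mathrm{link}}t$, so $e(G)\ge \frac12 K_{\mathrm{link}}t|G|$. I would take a minor-minimal (or subgraph-minimal) graph $M$ with $e(M)\ge c\,t|M|$ for a suitable constant $c<K_{\mathrm{link}}/2$. Minimality under edge deletion and edge contraction then gives the following.
\begin{itemize}
\item Every edge of $M$ lies in at least $ct$ triangles, since otherwise contracting it would lose too few edges.
\item $\delta(M)\ge ct$ and $|M|=O(ct)$.
\end{itemize}
From this I would extract a set $H$ in which every vertex has degree close to $|H|/2+ \Omega(t)$, i.e.\ any two vertices have $\Omega(t)$ common neighbours. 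In such a graph any $t$ pairs can be greedily linked by paths of length at most $2$ through unused common neighbours. This uses only $3t$ vertices, which is below the common-neighbourhood surplus once $c$ is large.

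I expect step (iii) to need care in the contraction version. There the dense structure lives in a minor, and its branch sets must be used to realise paths in $G$. This is precisely the technical heart of the Bollob\'as--Thomason argument, and it is where their explicit constant $22$ arises.

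For step (ii), I would apply Menger's theorem. Because $\kappa(G)\ge K_{\mathrm{link}}t\ge 2t$, there are $2t$ vertex-disjoint paths from $\{p_i,q_i\}$ to a set of $2t$ vertices of $H$ (or to $2t$ distinct branch sets). Let $T$ be the union of these fan paths. The danger is that the fan paths eat vertices of $H$ that the linkage in step (iii) needs. To handle this, I would choose the paths to be minimal, so that each meets $H$ only at its endpoint, and reserve a surplus of $\Omega(t)$ common neighbours in $H$ that the paths avoid.

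Finally, I would join the landing points of $p_i$ and $q_i$ by vertex-disjoint paths inside $H$, using the greedy common-neighbour linkage while avoiding $T$. Concatenating gives the required disjoint $p_i$--$q_i$ paths.

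The main obstacle is balancing steps (ii) and (iii): the $2t$ fan paths must not destroy the internal linkedness of the dense piece. My first attempt would choose $H$ so that its surplus degree exceeds $10t$, which absorbs the at most $2t$ landing vertices and $t$ midpoints.
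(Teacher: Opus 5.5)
The paper does not prove this theorem: it quotes it from Bollob\'as--Thomason and uses it only as a black box in Lemma~\ref{lem:linked-wrapping}. Read as a proof, your sketch has a genuine gap, and it sits at the two points you yourself flag as delicate. First, step (i) does not give the surplus you need. Minor-minimality of $M$ with respect to $e(M)\ge ct|M|$ gives two things: every edge lies in about $ct$ triangles, and some vertex $v$ has degree at most about $2ct$. The dense piece this yields is $M[N(v)]$, which has at most about $2ct$ vertices and minimum degree about $ct$. That is density $1/2$, with no additive $\Omega(t)$ surplus. At density $1/2$, two nonadjacent vertices are guaranteed only $2\delta-|H|+2\ge 2$ common neighbours. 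Such a graph can even have connectivity $2$ (two disjoint copies of $K_m$ plus two vertices joined to everything), so greedy linkage through common neighbours fails. The bound $\delta(H)\ge |H|/2+\Omega(t)$ needs a separate dense-minor lemma, which you do not supply. Moreover, no argument can produce such an $H$ as a \emph{subgraph}. There are graphs of girth at least $5$ with arbitrarily large connectivity (e.g.\ incidence graphs of projective planes), and in them no two vertices have two common neighbours. So $H$ must be a minor.

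Second, once $H$ is a minor, steps (ii) and (iii) are not routine, and this is exactly the part you defer. Menger's theorem gives $2t$ disjoint paths from the terminals to $2t$ chosen branch sets. However, these paths may pass through many other branch sets, disconnecting them or using up the spare branch sets meant to serve as midpoints. Truncating each path at the first branch set it meets can send two terminals into the same branch set. ``Choose minimal paths and reserve a surplus that they avoid'' is not something Menger's theorem lets you impose, because which branch sets the fan must cross is forced by $G$, not chosen by you. Closing this needs a genuine extremal argument that controls how the fan meets the model: choose the minor and the fan together, and show that enough intact branch sets with enough common adjacencies survive. This is where the real work of the cited proof lies. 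As written, your proposal reduces the theorem to two unproved claims rather than proving it.
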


We also use the following standard Moore bound.

\begin{lemma}[Moore bound]\label{lem:moore}
Let $G$ be an $N$-vertex graph with minimum degree at least $\delta\ge 3$.  Then $G$ contains a cycle of length at most
\[
        A_{\mathrm M}\frac{\log N}{\log \delta}
\]
for an absolute constant $A_{\mathrm M}$.
\end{lemma}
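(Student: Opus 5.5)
The statement is the Moore bound (Lemma~\ref{lem:moore}), and I would prove it by the standard breadth-first-search argument. Fix any vertex $v$ and let $g$ be the girth of $G$; the case of a forest cannot occur, since $\delta\ge 3$ forces a cycle. Suppose $g\ge 2r+1$ for some integer $r\ge 1$. Then the ball of radius $r$ around $v$ induces a tree. Otherwise two distinct walks of length at most $r$ from $v$ would meet, or an edge would join two vertices at depth at most $r$ without being a tree edge, and either event produces a cycle of length at most $2r$.

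Since every vertex has degree at least $\delta$, the tree structure gives the count layer by layer. The root has at least $\delta$ children, and every vertex at depth between $1$ and $r-1$ has at least $\delta-1$ children. Hence the ball contains at least
\[
1+\delta\sum_{i=0}^{r-1}(\delta-1)^i\ \ge\ (\delta-1)^r
\]
distinct vertices. These all lie in $V(G)$, so $(\delta-1)^r\le N$, that is, $r\le \log N/\log(\delta-1)$.

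Now take $r=\floor{(g-1)/2}$, which is at least $1$ because $g\ge 3$. Then $g\le 2r+2$, and therefore
\[
g\le 2\frac{\log N}{\log(\delta-1)}+2 .
\]
Since $\delta\ge 3$, we have $\log(\delta-1)\ge \log 2>0$ and $\log(\delta-1)\ge \tfrac{\log 2}{\log 3}\log\delta$, so the first term is at most a constant multiple of $\log N/\log\delta$. The additive $2$ also needs absorbing. Here $N\ge \delta+1$ gives $\log N/\log\delta\ge 1$, so $2\le 2\log N/\log\delta$. This yields $g\le A_{\mathrm M}\log N/\log\delta$ with, for example, $A_{\mathrm M}=2\log 3/\log 2+2$.

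I do not expect a genuine obstacle; the argument is routine. The only points needing care are the justification that the ball is a tree, which is where the girth hypothesis enters, and the bookkeeping of constants when $\delta$ is small, namely $\delta=3$. The latter is handled by the comparison $\log(\delta-1)\ge c\log\delta$ for $\delta\ge 3$.
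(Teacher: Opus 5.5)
Your argument is the same breadth-first-search count as the paper's. Your explicit handling of the constants, via $N\ge\delta+1$ and $\log(\delta-1)\ge\frac{\log 2}{\log 3}\log\delta$, makes precise what the paper dismisses as ``changing the absolute constant''.

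One intermediate claim is false, however: $g\ge 2r+1$ does not force the radius-$r$ ball to induce a tree. An edge between two vertices at depth $r$ closes a cycle of length up to $2r+1$, not $2r$. For example, in the Petersen graph ($\delta=3$, $g=5$, $r=2$) the radius-$2$ ball is the whole graph.

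The vertex count you actually use still holds under $g\ge 2r+1$. It only needs that each vertex at depth $i\le r-1$ has no neighbour at depth $i$, no second neighbour at depth $i-1$, and no common child with another depth-$i$ vertex. Each such violation yields a cycle of length at most $2i+2\le 2r$.

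Alternatively, assume $g\ge 2r+2$ as the paper does and take $r=\floor{(g-2)/2}$. This gives $g\le 2\log N/\log(\delta-1)+3$ and changes only the value of $A_{\mathrm M}$.
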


\begin{proof}
Let $g$ be the girth of $G$.  For a vertex $v$, let $B_G(v,r)$ be the subgraph induced by all vertices at distance at most $r$ from $v$.  If $g>2r+1$, then $B_G(v,r)$ is a tree for every $v\in V(G)$.  Hence it has at least
\[
        1+\delta\sum_{i=0}^{r-1}(\delta-1)^i
\]
vertices.  Choosing $r=\floor{\log_{\delta-1}N}+1$ gives more than $N$ vertices, a contradiction.  Thus
\[
        g\le 2\log_{\delta-1}N+2.
\]
The stated estimate follows by changing the absolute constant.
\end{proof}

\section{Robust short connections}

The following lemma is the main quantitative input.  It converts robust sublinear expansion into vertex-avoiding connections of length $O(\log N\log\log N)$.

\begin{lemma}[Robust short connection]\label{lem:robust-short-connection}
There exist absolute constants $A_{\mathrm{sc}}>0$ and $N_{\mathrm{sc}}$ such that the following holds.  Let $H$ be an $N$-vertex robust sublinear expander in the sense of Definition~\ref{def:robust-expander} with $N\ge N_{\mathrm{sc}}$.  Let $\sigma$ be a real number satisfying
\[
        1\le \sigma\le N^{1/2}.
\]
Let $S\subseteq V(H)$ satisfy
\[
        |S|\le \frac{\sigma}{1000},
\]
and let $X,Y\subseteq V(H)\setminus S$ satisfy
\[
        |X|,|Y|\ge \frac{\sigma}{4}.
\]
Then $X$ and $Y$ are joined in $H-S$ by a path of length at most
\[
        A_{\mathrm{sc}}\log N\log\log N.
\]
\end{lemma}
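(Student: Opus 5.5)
The plan is a two-sided ball-growing argument in $H-S$, using only the vertex-expansion part of Definition~\ref{def:robust-expander} with $F=\varnothing$. Vertex deletion of $S$ is handled by subtracting $|S|$ from the guaranteed neighbourhood. For $i\ge 0$, let $B_i(X)$ be the set of vertices of $H-S$ at distance at most $i$ from $X$ in $H-S$, and define $B_i(Y)$ similarly. Once both balls exceed $(N-|S|)/2$ vertices, they intersect, which gives an $X$--$Y$ path in $H-S$ of length at most the sum of the two radii. So it suffices to show that a ball starting from a set of size at least $\sigma/4$ reaches size $N/2+|S|$ within $O(\log N\log\log N)$ steps.

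\textbf{One growth step.} Let $U=B_i(X)$ with $\sigma/4\le |U|\le N/2+|S|$. Apply Definition~\ref{def:robust-expander} with $F=\varnothing$ and $\alpha=\alpha(U):=\log(N/|U|)/\log N$, so that $|U|=N^{1-\alpha}$. This gives $|N_H(U)|\ge \alpha|U|/3$, and hence
\[
|B_{i+1}(X)|\ge |U|+\frac{\alpha}{3}|U|-|S|.
\]
The key check is that $|S|\le \alpha|U|/6$, so that the growth factor is at least $1+\alpha/6$. This splits into two regimes.
\begin{itemize}
\item If $|U|\le N^{0.9}$, then $\alpha\ge 0.1$. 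Since $|S|\le\sigma/1000\le |U|/250$, we get $|S|\le \alpha|U|/6$.
\item If $N^{0.9}\le |U|\le N/2+|S|$, then $\alpha\ge c/\log N$ for an absolute $c>0$ (we have $|S|\le N^{1/2}$, so $|U|\le 0.6N$, say). Hence $\alpha|U|\ge cN^{0.9}/\log N$, which exceeds $6N^{1/2}\ge 6|S|$ once $N\ge N_{\mathrm{sc}}$.
\end{itemize}
The hypothesis $\sigma\le N^{1/2}$ is used exactly in the second regime.

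\textbf{Counting steps.} Write $|U|=Ne^{-t}$, so that $\alpha=t/\log N$. Each step increases $\log|U|$, that is, decreases $t$, by at least $\log(1+t/(6\log N))\ge t/(12\log N)$. Group the values of $t$ into dyadic ranges $t\in[2^j,2^{j+1}]$. Within one range, $t$ drops by at most $2^j$ at a rate of at least $2^j/(12\log N)$ per step, so the range is crossed in $O(\log N)$ steps. There are $O(\log\log N)$ dyadic ranges between $t\le\log N$ and $t\ge \log(1/0.6)$. The total is therefore $O(\log N\log\log N)$ steps, and this is the source of the $\log\log N$ factor. Applying the same bound to $Y$ and concatenating the two shortest paths through a common vertex of $B_r(X)\cap B_r(Y)$ gives the claim with $A_{\mathrm{sc}}$ absolute.

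\textbf{Main obstacle.} I expect the main obstacle to be the middle and large-set regime. There the expansion rate $\alpha/3$ decays to order $1/\log N$, and one must simultaneously make sure the deleted set $S$ does not swamp the expansion and that the accumulated step count stays at $\log N\log\log N$ rather than $\log^2 N$. The first issue is handled by the comparison $N^{0.9}/\log N\gg N^{1/2}$. The second is handled by the dyadic decomposition in $t$. I expect only routine care near the endpoint $|U|\approx N/2$, where $\alpha$ is of order $1/\log N$ but still bounded below.
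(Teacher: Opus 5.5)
Your proposal is correct and follows essentially the paper's own route: grow balls from $X$ and from $Y$ inside $H-S$, apply robust expansion with $F=\varnothing$ and $\alpha(U)=\log(N/|U|)/\log N$, absorb $|S|$ into the guaranteed neighbourhood, and show that $\log(N/|B_t|)$ contracts by a factor $1-\Theta(1/\log N)$ per step, which gives $O(\log N\log\log N)$ steps. The differences are cosmetic: the paper absorbs $|S|$ through a single endpoint estimate $u\log(N/u)/\log N\ge\sigma/40$ on $[\max\{1,\sigma/4\},N/2]$ rather than your two-regime split, and it iterates the contraction directly rather than counting dyadic ranges.
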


\begin{proof}
We prove the statement with $A_{\mathrm{sc}}=40$, after increasing $N_{\mathrm{sc}}$ if necessary.

For a set $U\subseteq V(H)$ with $1\le |U|\le N/2$, define
\[
        \alpha(U):=\frac{\log(N/|U|)}{\log N}.
\]
Then $|U|=N^{1-\alpha(U)}$ and $0<\alpha(U)\le 1$.  Applying Definition~\ref{def:robust-expander} with $F=\varnothing$ gives
\[
        |N_H(U)|\ge \frac{\alpha(U)}{3}|U|.
\]
Consequently, for every such $U\subseteq V(H)\setminus S$,
\[
        |N_{H-S}(U)|
        =|N_H(U)\setminus S|
        \ge |N_H(U)|-|S|
        \ge \frac{\alpha(U)}{3}|U|-|S|.
\]

We first record a simple estimate.  For every sufficiently large $N$, every $1\le \sigma\le N^{1/2}$, and every real $u$ satisfying
\[
        \max\{1,\sigma/4\}\le u\le N/2,
\]
one has \begin{equation}\label{ineq:sigma/40}
     u\frac{\log(N/u)}{\log N}\ge \frac{\sigma}{40}.
\end{equation}

Now grow a ball from $X$ inside $H-S$.  Set $B_0:=X$ and define recursively
\[
        B_{t+1}:=B_t\cup N_{H-S}(B_t).
\]
Thus $B_t$ is the set of vertices reachable from $X$ in $H-S$ by a path of length at most $t$. Since $B_t\supseteq X$, we have $|B_t|\ge |X|\ge \max\{1, \sigma/4\}$.  Put $ z_t:=\log\frac{N}{|B_t|}$ and 
\[
        \alpha_t:=\alpha(B_t)=\frac{\log(N/|B_t|)}{\log N}=\frac{z_t}{\log N}.
\]
\begin{claim}\label{claim:B_t<N/2 then B_t1 grows}
    If $|B_t|\le N/2$, then $ |B_{t+1}|\ge \left(1+\frac{\alpha_t}{4}\right)|B_t|.$
\end{claim}
\begin{proof}[Proof of Claim~\ref{claim:B_t<N/2 then B_t1 grows}]
    By the inequality~\ref{ineq:sigma/40}, $\alpha_t |B_t|\ge \frac{\sigma}{40}.$ Since $|S|\le \sigma/1000$, we get 
    \[|S|\le \frac1{25}\alpha_t |B_t|. \]Therefore
\[
        |N_{H-S}(B_t)|
        \ge \frac{\alpha_t}{3}|B_t|-|S|
        \ge \left(\frac13-\frac1{25}\right)\alpha_t |B_t|
        \ge \frac{\alpha_t}{4}|B_t|.
\]
Hence, whenever $|B_t|\le N/2$, we have $|B_{t+1}|\ge \left(1+\frac{\alpha_t}{4}\right)|B_t|. $
\end{proof}
Take
\[
        T:=\ceil{16\log N\log\log N}.
\]
\begin{claim}\label{claim:BT>N/2}
    $|B_T|>N/2$.
\end{claim}
\begin{proof}[Proof of Claim~\ref{claim:BT>N/2}]
    If $|B_s|>N/2$ for some $s\le T$, then the claim is immediate, since the sets $B_t$ are increasing.  Otherwise $|B_t|\le N/2$ for every $0\le t\le T$, and the following estimates apply for every $0\le t<T$.

As long as $|B_t|\le N/2$, we have $0<z_t\le \log N$ and $\alpha_t=z_t/\log N$.  The growth estimate gives
\[
        z_{t+1}\le z_t-\log\left(1+\frac{z_t}{4\log N}\right).
\]
Since $0\le z_t/(4\log N)\le 1/4$, we have $\log(1+w)\ge w/2$ for $0\le w\le 1/4$.  Thus
\[
        z_{t+1}\le \left(1-\frac1{8\log N}\right)z_t.
\]
Iterating up to time $T$ gives
\[
        z_T\le \exp\left(-\frac{T}{8\log N}\right)\log N.
\]
By the choice of $T$, and after increasing $N_{\mathrm{sc}}$ if necessary, the right-hand side is smaller than $\log 2$.  Hence $|B_T|>N/2$, proving the claim.
\end{proof}
Applying the same argument to $Y$, we obtain a set $B'_T$ of vertices reachable from $Y$ in $H-S$ within distance at most $T$, with $|B'_T|>N/2$.  Thus $B_T\cap B'_T\ne\varnothing$.  Concatenating a path from $X$ to a vertex in the intersection with a path from that vertex to $Y$, and then deleting repeated vertices if necessary, gives a path in $H-S$ from $X$ to $Y$ of length at most $2T\le 40\log N\log\log N$ for all sufficiently large $N$.
\end{proof}

We shall also need a connectivity consequence of the same expansion.

\begin{lemma}[Robust connectivity]\label{lem:robust-connectivity}
There exist absolute constants $c_{\mathrm{con}}>0$ and $N_{\mathrm{con}}$ such that the following holds.  Let $H$ be an $N$-vertex robust sublinear expander with $N\ge N_{\mathrm{con}}$ and $\delta(H)\ge q$.  Put
\[
        \sigma:=\min\{q,N^{1/2}\}.
\]
Then
\[
        \kappa(H)\ge c_{\mathrm{con}}\sigma.
\]
\end{lemma}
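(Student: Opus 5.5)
We argue by contradiction. Suppose $S\subseteq V(H)$ is a separating set with $|S|< c_{\mathrm{con}}\sigma$, where we will take $c_{\mathrm{con}}\le 1/1000$; the case where $H-S$ is a single vertex is excluded because $N$ is large and $|S|<\sigma\le N^{1/2}$. Let $A$ and $B$ be two distinct components of $H-S$, or let $A$ be one component and $B$ the union of the others. The goal is to show that both $A$ and $B$ contain at least $\sigma/4$ vertices. Once this is known, Lemma~\ref{lem:robust-short-connection} applies with $X=V(A)$, $Y=V(B)$ and the same $S$. Its hypotheses hold because $|S|\le\sigma/1000$ and $1\le\sigma\le N^{1/2}$, where $\sigma\ge1$ is automatic since $H$ has an edge and so $q\ge1$ may be assumed. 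The lemma then produces a path from $A$ to $B$ in $H-S$, contradicting the fact that $S$ separates them.

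The key step is the lower bound on component sizes. Let $U$ be the vertex set of any component of $H-S$. Every neighbour in $H$ of a vertex $u\in U$ lies in $U\cup S$. Hence $|U|-1+|S|\ge d_H(u)\ge q\ge\sigma$, which gives $|U|\ge \sigma-|S|\ge \sigma(1-c_{\mathrm{con}})\ge\sigma/4$. This bound needs only the minimum degree and no expansion at all. Applying it to every component shows that each of $A$ and $B$ has at least $\sigma/4$ vertices.

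The resulting constants are $c_{\mathrm{con}}=1/1000$ and $N_{\mathrm{con}}=N_{\mathrm{sc}}$, with the latter increased if needed so that $N-|S|\ge2$. The only delicate point is a bookkeeping one: Lemma~\ref{lem:robust-short-connection} must be applied to $X,Y\subseteq V(H)\setminus S$ that lie in different components, and its conclusion must be a path in $H-S$ rather than in $H$. Both conditions hold as set up. I therefore expect no real obstacle. The expansion is used only through the short-connection lemma, and the role of the minimum-degree hypothesis is to make each side of a small cut large enough to meet that lemma's size threshold.
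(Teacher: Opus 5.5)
Your proof is correct, but it takes a different route from the paper.

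\textbf{The paper's route.} It never calls Lemma~\ref{lem:robust-short-connection} here. It uses the same minimum-degree observation you make: every component of $H-S$ has at least $q-|S|+1\ge q/2$ vertices. It then picks a single component $U$ with $|U|\le N/2$ and notes that $N_H(U)\subseteq S$. One application of Definition~\ref{def:robust-expander}, with $F=\varnothing$ and $\alpha=\log(N/|U|)/\log N$, gives
\[
        |S|\ge \frac{\alpha}{3}|U|\ge \frac{c_0}{3}\sigma,
\]
where the last step is the endpoint estimate for $u\log(N/u)/\log N$ on $[\max\{1,\sigma/2\},N/2]$.

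\textbf{Your route.} You take two sides of the cut as $X$ and $Y$ and let the short-connection lemma produce a path in $H-S$ joining them.

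\textbf{What each buys.} Your version is more modular. It reuses an already proven lemma, avoids redoing the endpoint computation, and gives explicit constants $c_{\mathrm{con}}=1/1000$ and $N_{\mathrm{con}}$ essentially $N_{\mathrm{sc}}$. It is, however, heavier than necessary. The ball grown from $A$ inside $H-S$ can never leave $A$, so the contradiction already appears at the first growth step in the proof of that lemma. That first step is exactly the paper's one-shot expansion argument. The $O(\log N\log\log N)$ length bound you inherit is never used. The paper's argument needs only one small side of the cut, and its lower endpoint $\max\{1,\sigma/2\}$ absorbs small $\sigma$ without comment.

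\textbf{One fix to your justification.} Lemma~\ref{lem:robust-short-connection} needs $\sigma\ge 1$, and ``$H$ has an edge'' does not give $\delta(H)\ge 1$. The correct reason is that Definition~\ref{def:robust-expander} with $U=\{v\}$, $\alpha=1$ and $F=\varnothing$ gives $|N_H(v)|\ge 1/3$. Hence $\delta(H)\ge 1$, and you may replace $q$ by $\max\{q,1\}$, which only strengthens the conclusion. If $q\le 0$ the statement is trivial.
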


\begin{proof}
Choose $c_{\mathrm{con}}>0$ sufficiently small.  Suppose, for a contradiction, that $S\subseteq V(H)$ is a vertex cut with $|S|<c_{\mathrm{con}}\sigma.$ Recall that, by our convention, a vertex cut is allowed to leave a single vertex.  We first dispose of this case.  If $H-S$ has exactly one vertex, then
\[
        |S|=N-1\ge \frac12 N^{1/2}\ge \frac12\sigma
\]
for $N\ge 4$, contradicting $|S|<c_{\mathrm{con}}\sigma$ once $c_{\mathrm{con}}<1/2$.  Hence $H-S$ is disconnected and has at least two components.

If $A$ is a component and $v\in A$, then all neighbours of $v$ lie in $A\cup S$, so
\[
        q\le d_H(v)\le |A|-1+|S|.
\]
Thus, every component of $H-S$ has size at least $q-|S|+1\ge q/2$, provided $c_{\mathrm{con}}\le 1/4$. Therefore, there is a component $U$ of $H-S$ such that
\[
        \frac{q}{2}\le |U|\le \frac N2.
\]
Indeed, since $H-S$ has at least two components, at least one component has size at most $N/2$.  Every component has size at least $q/2$, by the previous paragraph.  Taking such a component gives the desired $U$. Since $U$ is a component of $H-S$, we have $N_H(U)\subseteq S.$ Let
\[
        \alpha:=\frac{\log(N/|U|)}{\log N}.
\]
By robust expansion with $F=\varnothing$,
\[
        |S|\ge |N_H(U)|\ge \frac{\alpha}{3}|U|.
\]
Since $q\ge\sigma$, the lower bound $|U|\ge q/2$ implies $|U|\ge\sigma/2$.  The same endpoint estimate used in the proof of Lemma~\ref{lem:robust-short-connection}, applied with lower endpoint $\max\{1,\sigma/2\}$, gives
\[
        \alpha |U|=|U|\frac{\log(N/|U|)}{\log N}\ge c_0\sigma
\]
for some absolute constant $c_0>0$, provided $N$ is sufficiently large.  Hence
\[
        |S|\ge \frac{c_0}{3}\sigma,
\]
contradicting the choice of $c_{\mathrm{con}}<c_0/3$.
\end{proof}

\section{Wrapping lemmas}

The following two lemmas are stated with an edge-disjointness conclusion strong enough to be iterated.  The point is to avoid not only the current cycle, but every edge whose two endpoints lie inside the current cycle.

\begin{figure}[t]
    \centering
    \includegraphics[width=0.62\textwidth]{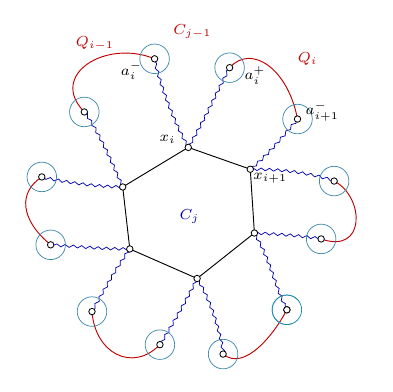}
    \caption{
    Controlled wrapping in one inductive step.  In the iteration, the current cycle is denoted by $C_j$ and the newly constructed outer cycle by $C_{j-1}$.  Each vertex $x_i$ of $C_j$ is assigned two distinct external neighbours $a_i^-$ and $a_i^+$. The paths $Q_i$ join $a_i^+$ to $a_{i+1}^-$ outside $V(C_j)$ and are chosen internally disjoint.  Together with the edges $x_i a_i^-$ and $x_i a_i^+$, these paths form the new outer cycle $C_{j-1}$, which visits the vertices of $C_j$ in their original
    cyclic order.
    }
    \label{fig:controlled-wrapping}
\end{figure}

\begin{lemma}[Controlled wrapping]\label{lem:controlled-wrapping}
Let $H$ be an $N$-vertex robust sublinear expander with $N\ge N_{\mathrm{sc}}$ and $\delta(H)\ge q$.  Put
\[
        \sigma:=\min\{q,N^{1/2}\},
        \qquad
        R:=A_{\mathrm{sc}}\log N\log\log N.
\]
Let
\[
        C=x_1x_2\cdots x_\ell x_1
\]
be a cycle in $H$.  There exists an absolute constant $B_{\mathrm{cw}}>0$ such that, if
\[
        \sigma\ge B_{\mathrm{cw}}\ell R,
\]
then $H$ contains a cycle $C^+$ satisfying
\[
        V(C)\subseteq V(C^+),
        \qquad
        E(C^+)\cap E(H[V(C)])=\varnothing,
\]
the cyclic order induced by $C^+$ on $V(C)$ agrees with the cyclic order of $C$, and
\[
        |C^+|\le B_{\mathrm{cw}}\ell R.
\]
\end{lemma}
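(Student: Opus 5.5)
We follow the picture in Figure~\ref{fig:controlled-wrapping} and build $C^+$ greedily, one arc at a time, using Lemma~\ref{lem:robust-short-connection} with a growing forbidden set. Throughout, put $V_0:=V(C)$ and $M:=B_{\mathrm{cw}}\ell R$, so that $\sigma\ge M$.

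\textbf{Step 1: choose the attachment vertices.} Each $x_i$ has degree at least $q\ge\sigma\ge M$ in $H$, and at most $\ell-1$ of its neighbours lie in $V_0$. Therefore $x_i$ has at least $\sigma-\ell\ge\sigma/2$ neighbours outside $V_0$.

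We pick distinct vertices $a_1^-,a_1^+,\ldots,a_\ell^-,a_\ell^+\notin V_0$ greedily, with $a_i^\pm\in N_H(x_i)$. This is possible because at most $2\ell\ll\sigma$ vertices are ever forbidden. Since the $a_i^\pm$ lie outside $V_0$, the edges $x_ia_i^\pm$ are not in $H[V_0]$.

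\textbf{Step 2: link consecutive attachments.} For $i=1,\ldots,\ell$ (indices mod $\ell$) we want paths $Q_i$ from $a_i^+$ to $a_{i+1}^-$ that avoid $V_0$, are pairwise vertex-disjoint, avoid all other $a_j^\pm$, and have length at most $R+2$.

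The difficulty is that Lemma~\ref{lem:robust-short-connection} joins large sets, not single vertices. So for each $i$ we let
\[
X_i:=N_H(a_i^+)\setminus S_i,\qquad Y_i:=N_H(a_{i+1}^-)\setminus S_i,
\]
where $S_i$ consists of $V_0$, all $2\ell$ attachment vertices other than the two endpoints, and all vertices of the previously built paths $Q_1,\ldots,Q_{i-1}$.

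We then apply Lemma~\ref{lem:robust-short-connection} in $H-S_i$. Its output is a path between $X_i$ and $Y_i$ of length at most $R$, which we extend by the edges to $a_i^+$ and $a_{i+1}^-$ to obtain $Q_i$. If the resulting walk is not a path, we shortcut it. If $a_i^+$ and $a_{i+1}^-$ happen to be adjacent, we simply use that edge.

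\textbf{Step 3: check the size conditions.} This is the main obstacle: the hypotheses of Lemma~\ref{lem:robust-short-connection} must hold at every step. We have
\[
|S_i|\le \ell+2\ell+\ell(R+3)\le 5\ell R
\]
for large $N$. Also $|N_H(a)|\ge\sigma$ for every vertex $a$, so $|X_i|,|Y_i|\ge\sigma-|S_i|\ge\sigma/4$.

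The lemma also requires $|S_i|\le\sigma/1000$. This holds once $B_{\mathrm{cw}}\ge 5000$, using $\sigma\ge B_{\mathrm{cw}}\ell R$. Finally, $\sigma\le N^{1/2}$ holds by the definition of $\sigma$.

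One subtlety is that the endpoints $a_i^+$ and $a_{i+1}^-$ themselves must not lie in $S_i$. We handle this by placing them in neither $S_i$ nor $X_i,Y_i$. Since the path joining $X_i$ to $Y_i$ avoids $S_i$, it could still pass through $a_i^+$ or $a_{i+1}^-$, but then shortcutting only shortens it. Hence each $Q_i$ is a path internally disjoint from everything else and avoids $V_0$.

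\textbf{Step 4: assemble $C^+$.} Set
\[
C^+:=x_1a_1^+Q_1a_2^-x_2a_2^+Q_2\cdots x_\ell a_\ell^+Q_\ell a_1^-x_1.
\]
This is a cycle because all the pieces are vertex-disjoint apart from their shared endpoints.

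It visits the $x_i$ in the order $x_1,x_2,\ldots,x_\ell$, so the cyclic order it induces on $V(C)$ agrees with that of $C$. Every edge of $C^+$ has at least one endpoint outside $V_0$, so $E(C^+)\cap E(H[V_0])=\varnothing$.

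Its length is at most $\ell(R+5)\le B_{\mathrm{cw}}\ell R$. Since $R\ge1$ for $N\ge N_{\mathrm{sc}}$ large, this bound holds after enlarging $B_{\mathrm{cw}}$ if needed; the same enlargement keeps Step 3 valid.
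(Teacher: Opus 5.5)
Your proposal is correct and follows the paper's proof essentially step for step: a greedy choice of distinct external terminals $a_i^\pm$, then successive connections via Lemma~\ref{lem:robust-short-connection} between the neighbourhoods of $a_i^+$ and $a_{i+1}^-$ in $H-S_i$ with $|S_i|=O(\ell R)\le\sigma/1000$, then concatenation. The only cosmetic difference is that the paper puts all $2\ell$ terminals into $S_i$, including the current endpoints, so $P_i$ automatically avoids $a_i^+$ and $a_{i+1}^-$; you instead leave the endpoints out of $S_i$ and shortcut the resulting walk, which works equally well.
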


\begin{proof}
Choose $B_{\mathrm{cw}}$ sufficiently large.  Since $R\ge 1$ and $\sigma\ge B_{\mathrm{cw}}\ell R$, we may assume
\[
        \sigma\ge 100\ell
        \qquad\text{and}\qquad
        6\ell R\le \frac{\sigma}{1000}.
\]
We can greedily choose distinct external terminals.  Indeed, when the two terminals at $x_i$ are chosen, at most $\ell-1$ neighbours of $x_i$ lie in $V(C)$ and fewer than $2\ell$ previously chosen terminals are forbidden.  Since $q\ge\sigma\ge 100\ell$, at least $q-3\ell\ge 2$ neighbours remain available.  Thus we may choose distinct vertices
\[
        a_i^-,a_i^+\in N_H(x_i)\setminus V(C),
        \qquad i\in[\ell].
\]

We next construct paths
\[
        Q_i:a_i^+\leadsto a_{i+1}^-,
        \qquad i\in[\ell],
\]
where indices are taken modulo $\ell$, such that they are pairwise internally vertex-disjoint and avoid $V(C)$ and all terminals internally.

For a path $P$, let $\operatorname{Int}(P)$ denote its set of internal vertices.  Suppose that $Q_1,\ldots,Q_{i-1}$ have already been constructed, each with length at most $R+2$, and satisfying the required disjointness conditions.  Let
\[
        S_i:=V(C)\cup\{a_r^-,a_r^+:r\in[\ell]\}\cup
        \bigcup_{r<i}\operatorname{Int}(Q_r).
\]
Then
\[
        |S_i|\le 3\ell+(i-1)(R+1)\le 6\ell R\le \frac{\sigma}{1000}.
\]
Moreover,
\[
        |N_H(a_i^+)\setminus S_i|\ge q-|S_i|\ge \sigma-\frac{\sigma}{1000}\ge \frac{\sigma}{4},
\]
and similarly
\[
        |N_H(a_{i+1}^-)\setminus S_i|\ge \frac{\sigma}{4}.
\]
Choose subsets
\[
        X_i\subseteq N_H(a_i^+)\setminus S_i,
        \qquad
        Y_i\subseteq N_H(a_{i+1}^-)\setminus S_i
\]
with
\[
        |X_i|=|Y_i|=\ceil{\sigma/4}.
\]
By Lemma~\ref{lem:robust-short-connection}, the sets $X_i$ and $Y_i$ are joined in $H-S_i$ by a path $P_i$ of length at most $R$.  Let $u_i\in X_i$ and $v_i\in Y_i$ be the endpoints of $P_i$, possibly with $u_i=v_i$.  Define $Q_i$ by adding the edge $a_i^+u_i$ to the beginning of $P_i$ and the edge $v_i a_{i+1}^-$ to the end.  Then $Q_i$ is an $a_i^+$--$a_{i+1}^-$ path of length at most $R+2$.  Since $P_i$ lies in $H-S_i$ and $X_i,Y_i\subseteq V(H)\setminus S_i$, this path has no internal vertex in $V(C)$, contains no terminal as an internal vertex, and is internally disjoint from all previously constructed paths.

After constructing all $Q_i$, define
\[
\begin{aligned}
        C^+:={}&x_1a_1^+Q_1a_2^-x_2a_2^+Q_2a_3^-x_3
        \cdots \\
        &\cdots x_\ell a_\ell^+Q_\ell a_1^-x_1.
\end{aligned}
\]
The terminals are all distinct, and the paths $Q_i$ are pairwise internally vertex-disjoint and contain no terminal as an internal vertex.  Hence $C^+$ is a cycle.  It passes through $x_1,x_2,\ldots,x_\ell$ in the same cyclic order as $C$.

Finally, every edge of $C^+$ either has one endpoint in $V(C)$ and the other outside $V(C)$, or lies entirely outside $V(C)$.  Hence
\[
        E(C^+)\cap E(H[V(C)])=\varnothing.
\]
Moreover,
\[
        |C^+|\le 3\ell+\ell(R+1)\le B_{\mathrm{cw}}\ell R
\]
after increasing $B_{\mathrm{cw}}$ if necessary.
\end{proof}

\begin{lemma}[Linked wrapping]\label{lem:linked-wrapping}
Let $H$ be an $N$-vertex robust sublinear expander with $N\ge N_{\mathrm{con}}$ and $\delta(H)\ge q$.  Put
\[
        \sigma:=\min\{q,N^{1/2}\}.
\]
Let $C=x_1x_2\cdots x_\ell x_1$ be a cycle in $H$.  There exists an absolute constant $B_{\mathrm{lw}}>0$ such that, if
\[
        \sigma\ge B_{\mathrm{lw}}\ell,
\]
then $H$ contains a cycle $C^+$ satisfying
\[
        V(C)\subseteq V(C^+),
        \qquad
        E(C^+)\cap E(H[V(C)])=\varnothing,
\]
and the cyclic order induced by $C^+$ on $V(C)$ agrees with the cyclic order of $C$.
\end{lemma}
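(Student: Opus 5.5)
We follow the proof of Lemma~\ref{lem:controlled-wrapping}. The difference is that the $\ell$ connecting paths are produced all at once by Theorem~\ref{thm:linkedness}, rather than one at a time by Lemma~\ref{lem:robust-short-connection}. We lose control of the length, but only need $\sigma\ge B_{\mathrm{lw}}\ell$.

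\textbf{Terminals.} Choose $B_{\mathrm{lw}}$ large; in particular $\sigma\ge 100\ell$, so $q\ge 100\ell$. As in the controlled wrapping argument, we greedily pick pairwise distinct terminals
\[
a_i^-,a_i^+\in N_H(x_i)\setminus V(C),\qquad i\in[\ell].
\]
When we process $x_i$, at most $\ell-1$ of its neighbours lie in $V(C)$ and fewer than $2\ell$ terminals are already used, so at least $q-3\ell\ge 2$ neighbours remain.

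\textbf{Linkage.} Let $H':=H-V(C)$. By Lemma~\ref{lem:robust-connectivity}, $\kappa(H)\ge c_{\mathrm{con}}\sigma$. Deleting the $\ell$ vertices of $C$ gives
\[
\kappa(H')\ge c_{\mathrm{con}}\sigma-\ell\ge K_{\mathrm{link}}\ell,
\]
provided $B_{\mathrm{lw}}\ge (K_{\mathrm{link}}+1)/c_{\mathrm{con}}$. We also need $H'$ to have more than $2\ell$ vertices; this holds because $|V(H)|-\ell\ge \delta(H)+1-\ell\ge q-\ell>2\ell$. So $H'$ is $\ell$-linked by Theorem~\ref{thm:linkedness}. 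Apply this to the $2\ell$ distinct vertices $p_i:=a_i^+$ and $q_i:=a_{i+1}^-$ (indices mod $\ell$). This gives pairwise vertex-disjoint paths $Q_i$ in $H'$ joining $a_i^+$ to $a_{i+1}^-$. Because the $Q_i$ are vertex-disjoint, no $Q_i$ meets another terminal, and none meets $V(C)$.

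\textbf{The cycle.} Define
\[
C^+:=x_1a_1^+Q_1a_2^-x_2a_2^+Q_2a_3^-x_3\cdots x_\ell a_\ell^+Q_\ell a_1^-x_1.
\]
The $x_i$ are distinct, the paths $Q_i$ are disjoint and avoid $V(C)$, so $C^+$ is a cycle. It visits $x_1,\dots,x_\ell$ in the cyclic order of $C$, and $V(C)\subseteq V(C^+)$. Each edge of $C^+$ is either of the form $x_ia_i^\pm$, with exactly one endpoint in $V(C)$, or lies in $H'$. Hence $E(C^+)\cap E(H[V(C)])=\varnothing$.

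\textbf{Small cases.} If $\ell=3$ and we want a genuine cycle, nothing extra is needed: each $Q_i$ has at least one edge and all $2\ell$ terminals are distinct, so $C^+$ has length at least $3\ell$.

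\textbf{Main obstacle.} The only delicate point is verifying the hypotheses of the linkedness theorem. This means checking that the connectivity loss from deleting $V(C)$ is absorbed by the constant, which the bound above does since $\ell\le \sigma/B_{\mathrm{lw}}$. It also means respecting the convention that $\kappa$ counts deletions leaving a single vertex; that is covered by the vertex count of $H'$ shown in the linkage step.
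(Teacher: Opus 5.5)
Your proposal is correct and follows essentially the same route as the paper: greedily choosing distinct external terminals, bounding $\kappa(H-V(C))\ge c_{\mathrm{con}}\sigma-\ell\ge K_{\mathrm{link}}\ell$ via Lemma~\ref{lem:robust-connectivity}, linking each $a_i^+$ to $a_{i+1}^-$ simultaneously by Theorem~\ref{thm:linkedness}, and concatenating. The only cosmetic difference is that you check $H-V(C)$ has enough vertices via $|V(H)|\ge\delta(H)+1$ instead of via the $2\ell$ terminals, and your ``small cases'' paragraph is unnecessary.
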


\begin{proof}
By Lemma~\ref{lem:robust-connectivity},
\[
        \kappa(H)\ge c_{\mathrm{con}}\sigma.
\]
Choose $B_{\mathrm{lw}}$ large enough so that $B_{\mathrm{lw}}\ge 4$ and
\[
        c_{\mathrm{con}}\sigma-\ell\ge K_{\mathrm{link}}\ell
\]
whenever $\sigma\ge B_{\mathrm{lw}}\ell$.

Since $q\ge \sigma\ge B_{\mathrm{lw}}\ell$, the terminals can be chosen greedily.  More precisely, after avoiding $V(C)$ and the previously chosen terminals, each $x_i$ still has at least
\[
        q-(\ell-1)-2\ell\ge 2
\]
available neighbours.  Hence we may choose distinct terminals
\[
        a_i^-,a_i^+\in N_H(x_i)\setminus V(C),
        \qquad i\in[\ell].
\]
In particular, $H-V(C)$ contains at least these $2\ell$ vertices.

We use the elementary fact that, for every graph $G$ and every vertex set $Z$ such that $G-Z$ has at least two vertices,
\[
        \kappa(G-Z)\ge \kappa(G)-|Z|.
\]
Indeed, if some set $T\subseteq V(G)\setminus Z$ with $|T|<\kappa(G)-|Z|$ disconnected $G-Z$ or left it with a single vertex, then $Z\cup T$ would disconnect $G$ or leave it with a single vertex, while
\[
        |Z\cup T|<\kappa(G),
\]
a contradiction.

Applying this with $G=H$ and $Z=V(C)$ gives
\[
        \kappa(H-V(C))\ge \kappa(H)-|V(C)|
        \ge c_{\mathrm{con}}\sigma-\ell
        \ge K_{\mathrm{link}}\ell.
\]
Thus $H-V(C)$ is $\ell$-linked by Theorem~\ref{thm:linkedness}.  Therefore there are pairwise vertex-disjoint paths
\[
        P_i:a_i^+\leadsto a_{i+1}^-,
        \qquad i\in[\ell],
\]
where indices are taken modulo $\ell$.

The concatenation
\[
\begin{aligned}
        C^+:={}&x_1a_1^+P_1a_2^-x_2a_2^+P_2a_3^-x_3
        \cdots \\
        &\cdots x_\ell a_\ell^+P_\ell a_1^-x_1
\end{aligned}
\]
is a cycle, and it visits $x_1,x_2,\ldots,x_\ell$ in their original cyclic order.  Since all paths $P_i$ lie in $H-V(C)$, every edge of $C^+$ either joins a vertex of $C$ to a vertex outside $C$, or lies outside $V(C)$.  Therefore
\[
        E(C^+)\cap E(H[V(C)])=\varnothing.
\]
\end{proof}

\section{Scale transfer and proof of the main theorem}

We need one elementary lemma ensuring that the polylogarithmic scale survives the passage from the original graph to the robust expander subgraph.

\begin{lemma}[Transfer of the polylogarithmic scale]\label{lem:scale-transfer}
Fix $k\ge 3$, constants $B,c>0$, and an integer $N_{\min}\ge 2$.  There exist constants $C=C(k,B,c,N_{\min})>0$ and $n_0=n_0(k,B,c,N_{\min})$ such that the following holds for all $n\ge n_0$.  Let $2\le N\le n$, and put
\[
        L:=\log n,
        \qquad
        \Lambda:=\log\log n,
        \qquad
        L_N:=\log N,
        \qquad
        \Lambda_N:=\log\log N.
\]
Let $q$ be a real number satisfying $q\le N$ and
\[
        q\ge c\frac{L_N}{L} C L^{k-1}\Lambda^{k-3}.
\]
Put
\[
        \sigma:=\min\{q,N^{1/2}\}.
\]
Then $N\ge N_{\min}$ and
\[
        \sigma\ge B L_N^{k-1}\Lambda_N^{k-3}.
\]
\end{lemma}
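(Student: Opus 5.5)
The argument is elementary and I plan to organise it around two facts that follow from the hypotheses. The first is a lower bound on $L_N$ itself. The second is a lower bound on each of the two quantities whose minimum defines $\sigma$.

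\textbf{Step 1: $N$ is large.} From $q\le N$ and the hypothesis on $q$ we get
\[
N\ge q\ge cC\,L_N L^{k-2}\Lambda^{k-3}\ge cC\,L_N\log 2\cdot L^{k-2}\Lambda^{k-3},
\]
using $L_N\ge\log 2$. Since $k\ge 3$, the factor $L^{k-2}\Lambda^{k-3}$ tends to infinity with $n$. Hence $N\to\infty$ uniformly as $n\to\infty$, and we may take $n_0$ so large that $N\ge N_{\min}$. More usefully, the inequality $N/\log N\ge cC\log 2\cdot L^{k-2}\Lambda^{k-3}$ gives $L_N\ge (k-2)\log L$ up to lower-order terms. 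In any case $L_N\to\infty$, so we may assume $N$ exceeds any fixed threshold.

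\textbf{Step 2: the $q$-branch.} Since $N\le n$, we have $L_N\le L$ and $\Lambda_N\le\Lambda$. Therefore
\[
q\ge cC\,L_N L^{k-2}\Lambda^{k-3}\ge cC\,L_N\cdot L_N^{k-2}\Lambda_N^{k-3}=cC\,L_N^{k-1}\Lambda_N^{k-3}.
\]
Choosing $C\ge B/c$ gives $q\ge B L_N^{k-1}\Lambda_N^{k-3}$.

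\textbf{Step 3: the $N^{1/2}$-branch.} This is the only point that is not pure monotonicity. We need
\[
N^{1/2}\ge B(\log N)^{k-1}(\log\log N)^{k-3}.
\]
This holds for all $N\ge N^\ast(k,B)$, because $N^{1/2}$ eventually dominates any polylogarithm. Step 1 ensures $N\ge N^\ast$ once $n_0$ is large, since $N\ge q$ and the lower bound on $q$ grows without bound in $n$. Precisely, $N/\log N\ge cC\log 2\,L^{k-2}\Lambda^{k-3}\to\infty$, and $x/\log x$ is increasing for $x\ge e$, so $N$ itself tends to infinity.

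Combining Steps 2 and 3, both terms in the minimum defining $\sigma$ are at least $B L_N^{k-1}\Lambda_N^{k-3}$, and so is $\sigma$. One small care point is that $\Lambda_N=\log\log N$ must be positive (or at least that $\Lambda_N^{k-3}$ makes sense). This is guaranteed by $N\ge N^\ast>e$. In the case $k=3$ the factor $\Lambda_N^{0}=1$ causes no issue. The main obstacle, mild as it is, is exactly Step 1: making the implication that $n$ large forces $N$ large uniformly in $N$. I handle it via the monotonicity of $x/\log x$, and this is what lets me choose $n_0$ depending only on $(k,B,c,N_{\min})$.
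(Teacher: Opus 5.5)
Your proof is correct and follows essentially the same route as the paper. Both arguments use $N\ge q\ge cC(\log 2)L^{k-2}\Lambda^{k-3}$ to push $N$ past a threshold $N_1\ge\max\{N_{\min},e^e\}$ beyond which $N^{1/2}\ge B L_N^{k-1}\Lambda_N^{k-3}$, and then bound the $q$-term by the monotonicity $L_N\le L$, $\Lambda_N\le\Lambda$ with $C\ge B/c$. The detour through $N/\log N$ and the monotonicity of $x/\log x$ is unnecessary, since $L_N\ge\log 2$ already gives that direct lower bound on $N$ (your first display in Step 1 keeps a stray factor $L_N$ that should have been dropped).
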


\begin{proof}
We choose $C$ large enough.  First choose $N_1=N_1(k,B,N_{\min})$ so large that
\[
        N_1\ge \max\{N_{\min},e^e\}
\]
and
\[
        N^{1/2}\ge B(\log N)^{k-1}(\log\log N)^{k-3}
\]
for every $N\ge N_1$.

We may assume $N\ge N_1$.  Indeed, if $2\le N<N_1$, then $q\le N<N_1$, while
\[
        q\ge cC L_N L^{k-2}\Lambda^{k-3}\ge cC(\log 2)L^{k-2}\Lambda^{k-3},
\]
which is larger than $N_1$ for all sufficiently large $n$.  Thus this case is impossible after increasing $n_0$.  Consequently $N\ge N_1\ge N_{\min}$.

Now $N\ge N_1\ge e^e$, so $L_N\le L$ and $\Lambda_N\le\Lambda$.  If $q\le N^{1/2}$, then $\sigma=q$ and
\[
        \sigma=q\ge cC L_N L^{k-2}\Lambda^{k-3}
        \ge cC L_N^{k-1}\Lambda_N^{k-3}.
\]
Taking $C\ge B/c$ gives the desired conclusion.

If $q>N^{1/2}$, then $\sigma=N^{1/2}$, and the choice of $N_1$ gives directly
\[
        \sigma=N^{1/2}\ge B L_N^{k-1}\Lambda_N^{k-3}.
\]
The lemma follows.
\end{proof}

\begin{proof}[Proof of Theorem~\ref{thm:main}]
Fix $k\ge 3$, and let $n$ be sufficiently large.  All lower bounds on $n$ below depend only on $k$ and on the absolute constants appearing in the preceding lemmas.

Let
\[
        L:=\log n,
        \qquad
        \Lambda:=\log\log n.
\]
Let $B_*=B_*(k)\ge 1$ be a sufficiently large constant dominating the constants required in Lemmas~\ref{lem:controlled-wrapping} and~\ref{lem:linked-wrapping}, and also large enough for the inductive estimates below.  Put
\[
        N_{\min}:=\max\{N_{\mathrm{sc}},N_{\mathrm{con}},\ceil{e^e}\}.
\]
Choose $C_*=C_*(k,B_*,1/6,N_{\min})$ sufficiently large, as in Lemma~\ref{lem:scale-transfer}, and let
\[
        D:=C_* L^{k-1}\Lambda^{k-3}.
\]
Let $G$ be an $n$-vertex graph with average degree at least $D$.  By Lemma~\ref{lem:robust-reduction}, $G$ contains a robust sublinear expander $H$ on $N$ vertices such that
\[
        d(H)\ge \frac13\frac{\log N}{\log n}D
\]
and
\[
        \delta(H)\ge \frac12d(H).
\]
Put
\[
        q:=\delta(H),
        \qquad
        \sigma:=\min\{q,N^{1/2}\},
        \qquad
        L_N:=\log N,
        \qquad
        \Lambda_N:=\log\log N.
\]
Applying Lemma~\ref{lem:scale-transfer} with $B=B_*$, $c=1/6$ and the above value of $N_{\min}$ gives
\[
        N\ge N_{\min}
        \qquad\text{and}\qquad
        \sigma\ge B_* L_N^{k-1}\Lambda_N^{k-3}.
\]
In particular, $N$ is sufficiently large for Lemmas~\ref{lem:robust-short-connection} and~\ref{lem:robust-connectivity} to apply.  Also $q\ge \sigma\ge 3$, after increasing $B_*$ if necessary.

By the Moore bound, $H$ contains a cycle $C_k$ satisfying
\[
        |C_k|\le A_{\mathrm M}\frac{L_N}{\log q}.
\]
Put
\[
        R:=A_{\mathrm{sc}}L_N\Lambda_N.
\]
We next construct $C_{k-1},C_{k-2},\ldots,C_2$ by iterated controlled wrapping.  The length bound used in the induction is the explicit finite-iteration estimate
\[
        |C_j|\le A_{\mathrm M}B_{\mathrm{cw}}^{k-j}\frac{L_N R^{k-j}}{\log q}
        \le A'_k\frac{L_N R^{k-j}}{\log q},
        \qquad 2\le j\le k,
\]
where $A'_k:=A_{\mathrm M}\max\{1,B_{\mathrm{cw}}^k\}$.  For $j=k$, this is precisely the Moore-bound estimate.

Suppose $3\le j\le k$ and $C_j$ has already been constructed with the displayed bound.  To apply Lemma~\ref{lem:controlled-wrapping}, it is enough to check
\[
        \sigma\ge B_{\mathrm{cw}} |C_j|R.
\]
Using the finite-iteration estimate, we have
\[
        B_{\mathrm{cw}} |C_j|R
        \le A_{\mathrm M}B_{\mathrm{cw}}^{k-j+1}\frac{L_N R^{k-j+1}}{\log q}
        \le A'_k B_{\mathrm{cw}}\frac{L_N R^{k-2}}{\log q},
\]
because $j\ge 3$ and $R\ge 1$.  Since
\[
        R=A_{\mathrm{sc}}L_N\Lambda_N,
\]
the right-hand side is at most
\[
        A''_k\frac{L_N^{k-1}\Lambda_N^{k-2}}{\log q}.
\]
The lower bound on $\sigma$ gives
\[
        q\ge \sigma\ge B_* L_N^{k-1}\Lambda_N^{k-3}.
\]
Since $B_*\ge 1$ and $N\ge e^e$, we have $\Lambda_N\ge 1$, and hence
\[
        \log q
        \ge \log\!\left(B_* L_N^{k-1}\Lambda_N^{k-3}\right)
        \ge (k-1)\Lambda_N
        \ge \Lambda_N.
\]
Hence
\[
        B_{\mathrm{cw}} |C_j|R
        \le A'''_k L_N^{k-1}\Lambda_N^{k-3}.
\]
Choosing $B_*=B_*(k)$ large enough in terms of $k$ and the constants above yields
\[
        \sigma\ge B_{\mathrm{cw}} |C_j|R.
\]
Thus Lemma~\ref{lem:controlled-wrapping} produces a cycle $C_{j-1}$ such that
\[
        V(C_j)\subseteq V(C_{j-1}),
        \qquad
        E(C_{j-1})\cap E(H[V(C_j)])=\varnothing,
\]
the cyclic order induced by $C_{j-1}$ on $V(C_j)$ agrees with the cyclic order of $C_j$, and
\[
        |C_{j-1}|\le B_{\mathrm{cw}} |C_j|R
        \le A_{\mathrm M}B_{\mathrm{cw}}^{k-j+1}\frac{L_N R^{k-j+1}}{\log q}.
\]
This is exactly the displayed finite-iteration bound with $j$ replaced by $j-1$.  The induction is complete down to $C_2$.

In particular,
\[
        |C_2|\le A_{\mathrm M}B_{\mathrm{cw}}^{k-2}\frac{L_N R^{k-2}}{\log q}
        \le A'_k L_N^{k-1}\Lambda_N^{k-3}.
\]
Again choosing $B_*$ sufficiently large gives
\[
        \sigma\ge B_{\mathrm{lw}}|C_2|.
\]
By Lemma~\ref{lem:linked-wrapping}, there is a cycle $C_1$ such that
\[
        V(C_2)\subseteq V(C_1),
        \qquad
        E(C_1)\cap E(H[V(C_2)])=\varnothing,
\]
and the cyclic order induced by $C_1$ on $V(C_2)$ agrees with the cyclic order of $C_2$.

It remains to check pairwise edge-disjointness.  We prove a stronger invariant.  Whenever $C_{j-1}$ is constructed from $C_j$, the wrapping lemma gives
\[
        E(C_{j-1})\cap E(H[V(C_j)])=\varnothing.
\]
For every $i\ge j$, we have $V(C_i)\subseteq V(C_j)$, and hence
\[
        E(C_i)\subseteq E(H[V(C_j)]).
\]
Therefore $C_{j-1}$ is edge-disjoint from every inner cycle $C_i$ with $i\ge j$.  Applying this at each construction step shows that
\[
        C_1,C_2,\ldots,C_k
\]
are pairwise edge-disjoint.

The vertex containments
\[
        V(C_k)\subseteq V(C_{k-1})\subseteq\cdots\subseteq V(C_1)
\]
and the preservation of cyclic order at each wrapping step show that the cycles are nested without geometric crossings.  Thus every $n$-vertex graph with average degree at least $C_*(\log n)^{k-1}(\log\log n)^{k-3}$ contains the desired configuration.  If an $n$-vertex graph has at least
\[
        \frac{C_*}{2}n(\log n)^{k-1}(\log\log n)^{k-3}
\]
edges, then its average degree is at least $C_*(\log n)^{k-1}(\log\log n)^{k-3}$.  Therefore the factor $1/2$ is absorbed by redefining the constant $C_k$, and the asserted bound on $f_k(n)$ follows for all sufficiently large $n$. 
\end{proof}






\section*{Acknowledgment}

The work is partly  supported by National Natural Science Foundation of China (Nos. 12371354, W2521102), the Science and Technology Commission of Shanghai Municipality (No.25LN3200600) and the Montenegrin-Chinese Science and Technology Cooperation Project (No. 4-3).

\bibliographystyle{alpha}
\bibliography{new_refs}

\end{document}